\date{\today}
\newtheorem{theorem}{Теорема}
\newtheorem{proposition}{Твердження}
\newtheorem{corollary}{Наслiдок}
\newtheorem{lemma}{Лема}
\theoremstyle{definition}
\newtheorem{remark}{Зауваження}
\newtheorem{definition}{Означення}
\begin{document}

\title[Про напiвгрупу ендоморфiзмiв напiвгрупи $\boldsymbol{B}_{\omega}^{\mathscr{F}^2}$]{Про напiвгрупу ендоморфiзмiв напiвгрупи $\boldsymbol{B}_{\omega}^{\mathscr{F}^2}$ з двоелементною сiм'Єю $\mathscr{F}^2$ iндуктивних непорожнiх пiдмножин у $\omega$}

\author[Олег~Гутік, Марко Серівка]{Олег~Гутік, Марко Серівка}
\address{Львівський національний університет ім. Івана Франка, Університецька 1, Львів, 79000, Україна}
\email{oleg.gutik@lnu.edu.ua, marko.serivka@lnu.edu.ua}

\keywords{Inverse semigroup, bicyclic monoid, endomorphism, bicyclic extension.}

\subjclass[2020]{20M10, 20M15, 20M18}

\begin{abstract}
Ми досліджуємо моноїд $\overline{\boldsymbol{End}}(\boldsymbol{B}_{\omega}^{\mathscr{F}^2})$ усіх ендоморфізмів біциклічного розширення $\boldsymbol{B}_{\omega}^{\mathscr{F}^2}$ з двоелементною сім'єю $\mathscr{F}^2$ індуктивних непорожніх підмножин у $\omega$.  Знайдено підмоноїд $\left\langle\varpi\right\rangle^1$ у напівгрупі $\overline{\boldsymbol{End}}(\boldsymbol{B}_{\omega}^{\mathscr{F}^2})$ такий, що кожен елемент напівгрупи $\overline{\boldsymbol{End}}(\boldsymbol{B}_{\omega}^{\mathscr{F}^2})$ однозначно зображається у вигляді  добутку моноїдального ендоморфізму напівгрупи $\boldsymbol{B}_{\omega}^{\mathscr{F}^2}$ та елемента з $\left\langle\varpi\right\rangle^1$.

УДК 512.53

\emph{Ключові слова та фрази:} Інверсна напівгрупа, біциклічний моноїд, ендоморфізм, біциклічне розширення.

\bigskip
\noindent
\emph{Oleg Gutik, Marko Serivka, \textbf{On the semigroup of endomorphisms of the semigroup $\boldsymbol{B}_{\omega}^{\mathscr{F}^2}$ with the two-element family $\mathscr{F}^2$ of inductive nonempty subsets of $\omega$}.}

\smallskip
\noindent
We study the semigroup $\overline{\boldsymbol{End}}(\boldsymbol{B}_{\omega}^{\mathscr{F}^2})$ of all endomorphisms of the bicyclic extension $\boldsymbol{B}_{\omega}^{\mathscr{F}^2}$ with the two-element family $\mathscr{F}^2$ of inductive nonempty subsets of $\omega$. The submonoid  $\left\langle\varpi\right\rangle^1$ of $\overline{\boldsymbol{End}}(\boldsymbol{B}_{\omega}^{\mathscr{F}^2})$ with the property that every element of the semigroup $\overline{\boldsymbol{End}}(\boldsymbol{B}_{\omega}^{\mathscr{F}^2})$ has the unique representation as the product of the monoid endomorphism of $\boldsymbol{B}_{\omega}^{\mathscr{F}^2}$ and the element of $\left\langle\varpi\right\rangle^1$ is constructed.

\end{abstract}

\maketitle


\section{Вступ}\label{section-1}

У цій праці ми користуємося термінологією з монографій \cite{Clifford-Preston-1961, Clifford-Preston-1967, Lawson-1998, Petrich-1984}. Надалі у тексті множину невід'ємних цілих чисел  позначатимемо через $\omega$. Для довільного числа $k\in\omega$ позначимо
${[k)=\{i\in\omega\colon i\geqslant k\}.}$

Нехай $\mathscr{P}(\omega)$~--- сім'я усіх підмножин у $\omega$.
Для довільних $F\in\mathscr{P}(\omega)$ i $n,m\in\omega$ приймемо
\begin{equation*}
n-m+F=\{n-m+k\colon k\in F\}, \qquad \hbox{якщо} \; F\neq\varnothing
\end{equation*}
i $n-m+F=\varnothing$, якщо $F=\varnothing$.
Будемо говорити, що непорожня підсім'я  $\mathscr{F}\subseteq\mathscr{P}(\omega)$ є \emph{${\omega}$-замкненою}, якщо $F_1\cap(-n+F_2)\in\mathscr{F}$ для довільних $n\in\omega$ та $F_1,F_2\in\mathscr{F}$.

Підмножина $A$ в $\omega$ називається \emph{індуктивною}, якщо з  $i\in A$ випливає, що $i+1\in A$. Очевидно, що $\varnothing$ --- індуктивна підмножина в $\omega$.

\begin{remark}\label{remark-2.2}
\begin{enumerate}
  \item\label{remark-2.2(1)} За лемою 6 з \cite{Gutik-Mykhalenych-2020} непорожня множина $F\subseteq \omega$ є індуктивною в $\omega$ тоді і лише тоді, коли $(-1+F)\cap F=F$.
  \item\label{remark-2.2(2)} Оскільки множина $\omega$ зі звичайним порядком є цілком впорядкованою, то для кожної непорожньої індуктивної підмножини $F$ у $\omega$ існує невід'ємне ціле число $n_F\in\omega$ таке, що $[n_F)=F$.
  \item\label{remark-2.2(3)} З \eqref{remark-2.2(2)} випливає, що перетин довільної скінченної кількості непорожніх індуктивних підмножин у $\omega$ є непорожньою індуктивною підмножиною в $\omega$.
\end{enumerate}
\end{remark}

Надалі через $E(S)$ позначатимемо множину ідемпотентів напівгрупи $S$. Напівгрупа ідемпотентів називається \emph{в'язкою}.

Якщо $S$~--- напівгрупа, то на $E(S)$ визначено частковий порядок:
$
e\preccurlyeq f
$   тоді і лише тоді, коли
$ef=fe=e$.
Так означений частковий порядок на $E(S)$ називається \emph{при\-род\-ним}.

Напівгрупа $S$ називається \emph{інверсною}, якщо для довільного елемента $s\in S$ існує єдиний елемент $s^{-1}\in S$ такий, що $ss^{-1}s=s$ i $s^{-1}ss^{-1}=s^{-1}$~\cite{Wagner-1952}. В інверсній напівгрупі $S$ вище означений елемент $s^{-1}$ називається \emph{інверсним до} $s$.

Означимо відношення $\preccurlyeq$ на інверсній напівгрупі $S$ так:
$
    s\preccurlyeq t
$
тоді і лише тоді, коли $s=te$, для деякого ідемпотента $e\in S$. Так означений частковий порядок назива\-єть\-ся \emph{при\-род\-ним част\-ковим порядком} на інверсній напівгрупі $S$~\cite{Wagner-1952}. Очевидно, що звуження природного часткового порядку $\preccurlyeq$ на інверсній напівгрупі $S$ на її в'язку $E(S)$ є при\-род\-ним частковим порядком на $E(S)$.

Якщо $S$~--- напівгрупа, то кожен гомоморфізм (ізоморфізм) з $S$ у $S$ називається \emph{ендо\-мор\-фіз\-мом} (\emph{автоморфізмом}) напівгрупи $S$. Добре відомо, що множина всіх ендоморфізмів фіксованої напівгрупи $S$ (автоморфізмів) стосовно операції композиції відображень є моноїдом (групою). Надалі через $(s)\varepsilon$ i $(A)\varepsilon$ будемо позначати образ елемента $s$ і підмножини $A$ напівгрупи $S$, відповідно, стосовно ендоморфізму $\varepsilon\colon S\to S$. Ендоморфізм $\varepsilon$ напівгрупи $S$ називається \emph{анулюючим}, якщо існує елемент $x\in S$ такий, що $(s)\varepsilon=x$ для всіх $s\in S$. Очевидно, якщо $\varepsilon$ --- анулюючий ендоморфізм напівгрупи $S$, то образ $(S)\varepsilon=x$ є ідемпотентом у $S$.

Нагадаємо (див.  \cite[\S1.12]{Clifford-Preston-1961}), що \emph{біциклічною напівгрупою} (або \emph{біциклічним моноїдом}) ${\mathscr{C}}(p,q)$ називається напівгрупа з одиницею, породжена двоелементною мно\-жи\-ною $\{p,q\}$ і визначена одним  співвідношенням $pq=1$. Біциклічна на\-пів\-група відіграє важливу роль у теорії на\-півгруп. Так, зокрема, класична теорема О.~Ан\-дерсена \cite{Andersen-1952}  стверджує, що {($0$-)}прос\-та напівгрупа з (ненульовим) ідем\-по\-тен\-том є цілком {($0$-)}прос\-тою тоді і лише тоді, коли вона не містить ізоморфну копію бі\-циклічної напівгрупи. Різні розширення та узагальнення біциклічного моноїда вводилися раніше багатьма авторами \cite{Fortunatov-1976, Fotedar-1974, Fotedar-1978, Gutik-Pagon-Pavlyk=2011, Warne-1967}. Такими, зокрема, є конструкції Брука та Брука--Рейлі занурення напівгруп у прості та описання інверсних біпростих і $0$-біпростих $\omega$-напівгруп \cite{Bruck-1958, Reilly-1966, Warne-1966}.

\begin{remark}\label{remark-10}
Легко бачити, що біциклічний моноїд ${\mathscr{C}}(p,q)$ ізоморфний напівгрупі, заданій на множині $\boldsymbol{B}_{\omega}=\omega\times\omega$ з напівгруповою операцією
\begin{equation*}
  (i_1,j_1)\cdot(i_2,j_2)=(i_1+i_2-\min\{j_1,i_2\},j_1+j_2-\min\{j_1,i_2\})=
\left\{
  \begin{array}{ll}
    (i_1-j_1+i_2,j_2), & \hbox{якщо~} j_1\leqslant i_2;\\
    (i_1,j_1-i_2+j_2), & \hbox{якщо~} j_1\geqslant i_2.
  \end{array}
\right.
\end{equation*}
Надалі ми будемо ототожнювати біциклічний моноїд ${\mathscr{C}}(p,q)$ з напівгрупою $\boldsymbol{B}_{\omega}$.
\end{remark}

У праці \cite{Gutik-Mykhalenych-2020} введено алгебраїчні розширення $\boldsymbol{B}_{\omega}^{\mathscr{F}}$ біциклічного моноїда для довільної $\omega$-замк\-не\-ної сім'ї $\mathscr{F}$ підмножин в $\omega$, які узагальнюють біциклічний моноїд, зліченну напівгрупу матричних одиниць і деякі інші комбінаторні інверсні напівгрупи.

Нагадаємо цю конструкцію.
Нехай $\boldsymbol{B}_{\omega}$~--- біциклічний моноїд і  $\mathscr{F}$ --- непорожня ${\omega}$-замкнена підсім'я в  $\mathscr{P}(\omega)$. На множині $\boldsymbol{B}_{\omega}\times\mathscr{F}$ озна\-чимо бінарну операцію ``$\cdot$''  формулою
\begin{equation}\label{eq-1.1}
  (i_1,j_1,F_1)\cdot(i_2,j_2,F_2)=
  \left\{
    \begin{array}{ll}
      (i_1-j_1+i_2,j_2,(j_1-i_2+F_1)\cap F_2), & \hbox{якщо~} j_1<i_2;\\
      (i_1,j_2,F_1\cap F_2),                   & \hbox{якщо~} j_1=i_2;\\
      (i_1,j_1-i_2+j_2,F_1\cap (i_2-j_1+F_2)), & \hbox{якщо~} j_1>i_2.
    \end{array}
  \right.
\end{equation}

У \cite{Gutik-Mykhalenych-2020} доведено, якщо сім'я  $\mathscr{F}\subseteq\mathscr{P}(\omega)$ є ${\omega}$-замкненою, то $(\boldsymbol{B}_{\omega}\times\mathscr{F},\cdot)$ є напівгрупою, а також, що $\boldsymbol{B}_{\omega}^{\mathscr{F}}$ є комбінаторною інверсною напівгрупою, і описано відношення Ґріна, частковий природний порядок на напівгрупі $\boldsymbol{B}_{\omega}^{\mathscr{F}}$ та її множину ідемпотентів. Крім того, у \cite{Gutik-Mykhalenych-2020} доведено критерії  простоти, $0$-простоти, біпростоти та $0$-біпростоти напівгрупи $\boldsymbol{B}_{\omega}^{\mathscr{F}}$, і вказано умови, коли $\boldsymbol{B}_{\omega}^{\mathscr{F}}$ містить одиницю, ізоморфна біциклічному моноїду або зліченній напівгрупі матричних одиниць.

Припустимо, що ${\omega}$-замкнена сім'я $\mathscr{F}\subseteq\mathscr{P}(\omega)$ містить порожню множину $\varnothing$, то з означення напівгрупової операції ``$\cdot$'' на $\boldsymbol{B}_{\omega}\times\mathscr{F}$ випливає, що множина
$ 
  \boldsymbol{I}=\{(i,j,\varnothing)\colon i,j\in\omega\}
$ 
є ідеалом у $(\boldsymbol{B}_{\omega}\times\mathscr{F},\cdot)$.

\begin{definition}[\!\!{\cite{Gutik-Mykhalenych-2020}}]\label{definition-1.1}
Для довільної ${\omega}$-замкненої сім'ї $\mathscr{F}\subseteq\mathscr{P}(\omega)$ означимо
\begin{equation*}
  \boldsymbol{B}_{\omega}^{\mathscr{F}}=
\left\{
  \begin{array}{ll}
    (\boldsymbol{B}_{\omega}\times\mathscr{F},\cdot)/\boldsymbol{I}, & \hbox{якщо~} \varnothing\in\mathscr{F};\\
    (\boldsymbol{B}_{\omega}\times\mathscr{F},\cdot), & \hbox{якщо~} \varnothing\notin\mathscr{F}.
  \end{array}
\right.
\end{equation*}
\end{definition}

У \cite{Gutik-Lysetska=2021, Lysetska=2020} досліджено алгебраїчну структуру напівгрупи $\boldsymbol{B}_{\omega}^{\mathscr{F}}$ у випадку, коли ${\omega}$-замкнена сім'я $\mathscr{F}$ складається з атомарних підмножин (одноточкових підмножин і порожньої множини) в ${\omega}$. Зокрема доведено, що за виконання таких умов на сім'ю $\mathscr{F}$ напівгрупа  $\boldsymbol{B}_{\omega}^{\mathscr{F}}$ ізоморфна піднапівгрупі розширення Брандта напівгратки $(\omega,\min)$.

Із зауваження~\ref{remark-2.2}\eqref{remark-2.2(3)} випливає, якщо сім'я $\mathscr{F}_0$ склада\-єть\-ся з індуктивних в $\omega$ підмножин і міс\-тить порожню множину $\varnothing$ як елемент, то для сім'ї $\mathscr{F}=\mathscr{F}_0\setminus\{\varnothing\}$ множина  $\boldsymbol{B}_{\omega}^{\mathscr{F}}$ з індукованою напівгруповою операцією з  $\boldsymbol{B}_{\omega}^{\mathscr{F}_0}$ є піднапівгрупою в $\boldsymbol{B}_{\omega}^{\mathscr{F}_0}$.

У \cite{Gutik-Mykhalenych-2021} досліджуються групові конгруенції на напівгрупі $\boldsymbol{B}_{\omega}^{\mathscr{F}}$ та її гомоморфні рет\-рак\-ти у випадку, коли $\mathscr{F}$ --- ${\omega}$-замкнена сім'я  з індуктивних непорожніх підмножин в $\omega$. Доведено, що конгруенція $\mathfrak{C}$ на $\boldsymbol{B}_{\omega}^{\mathscr{F}}$ є груповою, тоді і лише тоді, коли звуження конгруенції $\mathfrak{C}$ на піднапівгрупу в $\boldsymbol{B}_{\omega}^{\mathscr{F}}$, яка ізоморфна біциклічній напівгруп, не є відношенням рівності. Також, у \cite{Gutik-Mykhalenych-2021} описано всі нетривіальні гомоморфні ретракти й ізоморфізми напівгрупи $\boldsymbol{B}_{\omega}^{\mathscr{F}}$.

Ендоморфізми біциклічного моноїда та розширеної біциклічної напівгрупи описані в \cite{Gutik-Prokhorenkova-Sekh-2021}, і отримані результати поширено на біциклічні розширення лінійно впорядкованих архімедових напівгруп у \cite{Gutik-Prokhorenkova-2022}.

Надалі скрізь в тексті ми вважаємо, що ${\omega}$-замкнена сім'я $\mathscr{F}$ складається лише з індуктивних непорожніх підмножин у $\omega$. У \cite{Gutik-Mykhalenych-2022} доведено, що iн'\-cктивний ендоморфізм $\varepsilon$ напiвгрупи $\boldsymbol{B}_{\omega}^{\mathscr{F}}$ є тотожним відображенням тоді і тільки тоді, коли $\varepsilon$ має три різні нерухомі точки, що еквівалентно існуванню неідемпотентного елемента $(i,j,[p))\in\boldsymbol{B}_{\omega}^{\mathscr{F}}$ такого, що $(i,j,[p))\varepsilon=(i,j,[p))$.

У \cite{Gutik-Pozdniakova=2022} досліджено ін'єктивні ендоморфізми напівгрупи $\boldsymbol{B}_{\omega}^{\mathscr{F}^2}$ з двоелементною сім'єю $\mathscr{F}^2$ ін\-дук\-тив\-них непорожніх підмножин у $\omega$. Описано елементи напівгрупи $\boldsymbol{End}^1_*(\boldsymbol{B}_{\omega}^{\mathscr{F}^2})$ усіх ін'єктивних моноїдальних ендоморфізмів, тобто які зберігають одиницю моноїда $\boldsymbol{B}_{\omega}^{\mathscr{F}^2}$, і доведено, що відно\-шен\-ня Ґріна $\mathscr{R}$, $\mathscr{L}$, $\mathscr{H}$, $\mathscr{D}$ і $\mathscr{J}$ на напівгрупі  $\boldsymbol{End}^1_*(\boldsymbol{B}_{\omega}^{\mathscr{F}^2})$ збігаються з відношенням рівності. У \cite{Gutik-Pozdniakova=2023, Gutik-Pozdniakova=2024} досліджено структуру напівгрупи $\boldsymbol{End}(\boldsymbol{B}_{\omega}^{\mathscr{F}^2})$ усіх моноїдальних ендоморфізмів напівгрупи $\boldsymbol{B}_{\omega}^{\mathscr{F}^2}$, напівгрупову операцію та відношення Ґріна на напівгрупі $\boldsymbol{End}(\boldsymbol{B}_{\omega}^{\mathscr{F}^2})$.

У цій праці ми досліджуємо немоноїдальні ендоморфізми (тобто такі, що не зберігають одиницю) напівгрупи $\boldsymbol{B}_{\omega}^{\mathscr{F}^2}$ з двоелементною сім'єю $\mathscr{F}^2$ індуктивних непорожніх підмножин у $\omega$. Надалі через $\overline{\boldsymbol{End}}(\boldsymbol{B}_{\omega}^{\mathscr{F}^2})$ позначатимемо моноїд усіх ендоморфізмів напівгрупи $\boldsymbol{B}_{\omega}^{\mathscr{F}^2}$. Знайдено підмоноїд $\left\langle\varpi\right\rangle^1$ у $\overline{\boldsymbol{End}}(\boldsymbol{B}_{\omega}^{\mathscr{F}^2})$ такий, що кожен елемент напівгрупи $\overline{\boldsymbol{End}}(\boldsymbol{B}_{\omega}^{\mathscr{F}^2})$ однозначно зображається у вигляді  добутку моноїдального ендоморфізму напівгрупи $\boldsymbol{B}_{\omega}^{\mathscr{F}^2}$ та елемента з $\left\langle\varpi\right\rangle^1$.

За твердженням 1 з \cite{Gutik-Mykhalenych-2020}, не зменшуючи загальності, надалі в цьому тексті можемо вважати, що двоелементна сім'я  $\mathscr{F}^2$ збігається з $\{[0), [1)\}$.

\section{Про піднапівгрупи в $\boldsymbol{B}_{\omega}^{\mathscr{F}^2}$ та пов'язані з ними ін'єктивні ендоморфізми}\label{section-2}

Для довільного $p\in\{0,1\}$ позначимо $\boldsymbol{B}_{\omega}^{\mathscr{F}^2_p}=\{(i,j,[p))\colon i,j\in\omega\}$.

Означимо відображення $\varpi\colon \boldsymbol{B}_{\omega}^{\mathscr{F}^2}\to \boldsymbol{B}_{\omega}^{\mathscr{F}^2}$ за формулою
\begin{equation*}
  (i,j,[p)))\varpi=
  \left\{
    \begin{array}{ll}
      (i,j,[1))),     & \hbox{якщо~} p=0;\\
      (i+1,j+1,[0))), & \hbox{якщо~} p=1.
    \end{array}
  \right.
\end{equation*}

\begin{lemma}\label{lemma-2.1}
Відображення $\varpi\colon \boldsymbol{B}_{\omega}^{\mathscr{F}^2}\to \boldsymbol{B}_{\omega}^{\mathscr{F}^2}$ є ін'єктивним ендоморфізмом напівгрупи $\boldsymbol{B}_{\omega}^{\mathscr{F}^2}$.
\end{lemma}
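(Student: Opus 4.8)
The plan is to check directly that $\varpi$ is multiplicative; once that is done, the fact that it is an endomorphism of the \emph{inverse} semigroup $\boldsymbol{B}_{\omega}^{\mathscr{F}^2}$ is automatic, because $\boldsymbol{B}_{\omega}^{\mathscr{F}^2}$ is inverse and every homomorphism between inverse semigroups preserves inverses. First I would record a simplified form of the product \eqref{eq-1.1} in the case $\mathscr{F}^2=\{[0),[1)\}$. For $p\in\{0,1\}$ and an integer $d\geqslant 1$ one has $-d+[p)=\{m\in\mathbb{Z}\colon m\geqslant p-d\}\supseteq[0)$, since $p-d\leqslant 0$; hence the set-intersection appearing in the third coordinate of a product collapses. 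Concretely, $(i_1,j_1,[p_1))\cdot(i_2,j_2,[p_2))$ equals $(i_1-j_1+i_2,j_2,[p_2))$ when $j_1<i_2$, equals $(i_1,j_2,[\max\{p_1,p_2\}))$ when $j_1=i_2$, and equals $(i_1,j_1-i_2+j_2,[p_1))$ when $j_1>i_2$. In other words the surviving third coordinate is the one coming from the factor with the ``larger'' middle index, with a maximum taken in the tie case.

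With this formula in hand I would verify $(\alpha\cdot\beta)\varpi=(\alpha\varpi)(\beta\varpi)$ for $\alpha=(i_1,j_1,[p_1))$ and $\beta=(i_2,j_2,[p_2))$, splitting into the four choices of $(p_1,p_2)\in\{0,1\}^2$ and, within each, into the trichotomy $j_1<i_2$, $j_1=i_2$, $j_1>i_2$. The hard part will be the trichotomy on the right-hand side. Applying $\varpi$ turns the middle index of the left factor into $j_1+p_1$ and the first index of the right factor into $i_2+p_2$, so the branch of the product $(\alpha\varpi)(\beta\varpi)$ is decided by comparing $j_1+p_1$ with $i_2+p_2$, whereas the branch used to compute $(\alpha\cdot\beta)\varpi$ is decided by comparing $j_1$ with $i_2$. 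When $p_1=p_2$ the two comparisons coincide and the check is routine; the genuinely delicate situations are $p_1\neq p_2$, where the comparison is shifted by $\pm 1$, so that a tie on one side becomes a strict inequality on the other (for instance $p_1=1,\,p_2=0,\,j_1=i_2$ yields equality on the left but $j_1+p_1>i_2+p_2$ on the right). The crux is to confirm in these borderline cases that the simultaneous flip of the surviving third coordinate compensates the unit shift of the indices, so that both sides reduce to the same triple.

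A cleaner route that sidesteps the boundary bookkeeping is to exhibit $\varpi$ as a conjugate of a standard endomorphism of the bicyclic monoid. Define $\Phi\colon\boldsymbol{B}_{\omega}^{\mathscr{F}^2}\to\boldsymbol{B}_{\omega}$ by $\Phi(i,j,[p))=(2i+p,\,2j+p)$. A short check against the three cases of the simplified product shows that $\Phi$ is an injective homomorphism whose image is the subsemigroup $\{(m,n)\colon m\equiv n\pmod{2}\}$ of $\boldsymbol{B}_{\omega}$. Moreover $\Phi\circ\varpi=\sigma\circ\Phi$, where $\sigma(m,n)=(m+1,n+1)$ is the unit-shift endomorphism of $\boldsymbol{B}_{\omega}$, which is readily seen to be a homomorphism and to preserve $m-n$, hence to preserve the image of $\Phi$. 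Since $\sigma$ is an endomorphism fixing $\Phi(\boldsymbol{B}_{\omega}^{\mathscr{F}^2})$ setwise and $\Phi$ is an isomorphism onto its image, $\varpi=\Phi^{-1}\circ\sigma\circ\Phi$ is an endomorphism of $\boldsymbol{B}_{\omega}^{\mathscr{F}^2}$; the inverse-preserving property again follows from $\boldsymbol{B}_{\omega}^{\mathscr{F}^2}$ being an inverse semigroup.
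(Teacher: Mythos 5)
Your proposal is correct, and the route you actually recommend is genuinely different from the paper's. The paper proceeds exactly along the lines of your first sketch: it disposes of the same\nobreakdash-parameter products by reducing them to endomorphisms of the bicyclic monoid, and then grinds through the two mixed products $(i_1,j_1,[0))\cdot(i_2,j_2,[1))$ and $(i_1,j_1,[1))\cdot(i_2,j_2,[0))$, refining the trichotomy $j_1<i_2$, $j_1=i_2$, $j_1>i_2$ into a four\nobreakdash-way split precisely to absorb the unit shift you identify as the delicate point; as written, your first route stops at naming that difficulty rather than resolving it, so on its own it would not be a proof. Your second route does resolve it: I checked that $\Phi(i,j,[p))=(2i+p,2j+p)$ is an injective homomorphism onto $\{(m,n)\in\boldsymbol{B}_{\omega}\colon m\equiv n \pmod 2\}$ --- the key points being that $j_1<i_2$ forces $2j_1+p_1<2i_2+p_2$ (and symmetrically for $>$) because the gap of at least $2$ dominates the difference of the $p$'s, while the tie case is settled by $[p_1)\cap[p_2)=[\max\{p_1,p_2\})$ --- and that $\Phi\circ\varpi=\sigma\circ\Phi$ with $\sigma(m,n)=(m+1,n+1)$ preserving the parity condition. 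Hence $\varpi=\Phi^{-1}\circ\sigma\circ\Phi$ is a composition of injective homomorphisms, which yields the endomorphism property \emph{and} the injectivity asserted in the lemma (the latter you leave implicit; the paper dismisses it as obvious, and it also follows directly from the definition of $\varpi$). What the embedding buys is that all the boundary bookkeeping is done once, inside the single verification that $\Phi$ is a homomorphism, where the two\nobreakdash-case bicyclic product replaces the three\nobreakdash-case product with set intersections; what the paper's direct computation buys is that it introduces no auxiliary object and records the explicit mixed\nobreakdash-product formulas. If you submit the second route, do write out the ``short check'' that $\Phi$ is a homomorphism --- it carries the entire content of the lemma.
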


\begin{proof}
За твердженням~3 з \cite{Gutik-Mykhalenych-2020} для довільного числа $p\in\{0,1\}$ напівгрупа $\boldsymbol{B}_{\omega}^{\mathscr{F}^2_p}$ ізоморфна бі\-цик\-ліч\-ній напівгрупі. Звідси випливають такі рівності
\begin{equation*}
  ((i_1,j_1,[0))\cdot (i_2,j_2,[0)))\varpi=(i_1,j_1,[0))\varpi\cdot(i_2,j_2,[0))\varpi
\end{equation*}
i
\begin{equation*}
  ((i_1,j_1,[1))\cdot (i_2,j_2,[1)))\varpi=(i_1,j_1,[1))\varpi\cdot(i_2,j_2,[1))\varpi
\end{equation*}
для довільних $i_1,j_1,i_2,j_2\in\omega$, оскільки відображення $\iota\colon \boldsymbol{B}_{\omega}\to \boldsymbol{B}_{\omega}$, $(i,j)\mapsto(i,j)$ --- єдиний автоморфізм біциклічного моноїда $\boldsymbol{B}_{\omega}$.

Для довільних $i_1,j_1,i_2,j_2\in\omega$ маємо, що
\begin{align*}
  ((i_1,j_1,[0))\cdot (i_2,j_2,[1)))\varpi&=
  \left\{
    \begin{array}{ll}
      (i_1-j_1+i_2,j_2,(j_1-i_2+[0))\cap [1))\varpi, & \hbox{якщо~} j_1<i_2;\\
      (i_1,j_2,[0)\cap [1))\varpi,                   & \hbox{якщо~} j_1=i_2;\\
      (i_1,j_1-i_2+j_2,[0)\cap (i_2-j_1+[1)))\varpi, & \hbox{якщо~} j_1>i_2
    \end{array}
  \right.= \\
   &=
   \left\{
    \begin{array}{ll}
      (i_1-j_1+i_2,j_2,[1))\varpi, & \hbox{якщо~} j_1<i_2;\\
      (i_1,j_2,[1))\varpi,         & \hbox{якщо~} j_1=i_2;\\
      (i_1,j_1-i_2+j_2,[0))\varpi, & \hbox{якщо~} j_1>i_2
    \end{array}
  \right.= \\
   &=
   \left\{
    \begin{array}{ll}
      (i_1-j_1+i_2+1,j_2+1,[0)), & \hbox{якщо~} j_1<i_2;\\
      (i_1+1,j_2+1,[0)),         & \hbox{якщо~} j_1=i_2;\\
      (i_1,j_1-i_2+j_2,[1)),     & \hbox{якщо~} j_1>i_2,
    \end{array}
  \right.
\end{align*}
\begin{align*}
  (i_1,j_1,[0))\varpi\cdot(i_2,j_2,[1))\varpi&=(i_1,j_1,[1))\cdot(i_2+1,j_2+1,[0))= \\
   &=
   \left\{
    \begin{array}{ll}
      (i_1-j_1+i_2+1,j_2+1,(j_1-i_2-1+[1))\cap[0)), & \hbox{якщо~} j_1<i_2{+}1;\\
      (i_1,j_2+1,[1)\cap[0)),                       & \hbox{якщо~} j_1=i_2{+}1;\\
      (i_1,j_1-i_2+j_2,(i_2+1-j_1+[0))\cap[1)),     & \hbox{якщо~} j_1>i_2{+}1
    \end{array}
  \right.=\\
   &=
   \left\{
    \begin{array}{ll}
      (i_1-j_1+i_2+1,j_2+1,[0)), & \hbox{якщо~} j_1<i_2+1;\\
      (i_1,j_2+1,[1)),           & \hbox{якщо~} j_1=i_2+1;\\
      (i_1,j_1-i_2+j_2,[1)),     & \hbox{якщо~} j_1>i_2+1
    \end{array}
  \right.=
  \\
   &=
   \left\{
    \begin{array}{ll}
      (i_1-j_1+i_2+1,j_2+1,[0)), & \hbox{якщо~} j_1<i_2;\\
      (i_1+1,j_2+1,[0)),         & \hbox{якщо~} j_1=i_2;\\
      (i_1,j_1-i_2-1+j_2+1,[1)),           & \hbox{якщо~} j_1=i_2+1;\\
      (i_1,j_1-i_2+j_2,[1)),     & \hbox{якщо~} j_1>i_2+1
    \end{array}
  \right.=\\
   &=
   \left\{
    \begin{array}{ll}
      (i_1-j_1+i_2+1,j_2+1,[0)), & \hbox{якщо~} j_1<i_2;\\
      (i_1+1,j_2+1,[0)),         & \hbox{якщо~} j_1=i_2;\\
      (i_1,j_1-i_2+j_2,[1)),     & \hbox{якщо~} j_1>i_2
    \end{array}
  \right.
\end{align*}
i
\begin{align*}
  ((i_1,j_1,[1))\cdot (i_2,j_2,[0)))\varpi&=
   \left\{
    \begin{array}{ll}
      (i_1-j_1+i_2,j_2,(j_1-i_2+[1))\cap [0))\varpi, & \hbox{якщо~} j_1<i_2;\\
      (i_1,j_2,[1)\cap [0))\varpi,                   & \hbox{якщо~} j_1=i_2;\\
      (i_1,j_1-i_2+j_2,[1)\cap (i_2-j_1+[0)))\varpi, & \hbox{якщо~} j_1>i_2
    \end{array}
  \right.= \\
   &= \left\{
    \begin{array}{ll}
      (i_1-j_1+i_2,j_2,[0))\varpi, & \hbox{якщо~} j_1<i_2;\\
      (i_1,j_2,[1))\varpi,         & \hbox{якщо~} j_1=i_2;\\
      (i_1,j_1-i_2+j_2,[1))\varpi, & \hbox{якщо~} j_1>i_2
    \end{array}
  \right.=\\
  &= \left\{
    \begin{array}{ll}
      (i_1-j_1+i_2,j_2,[1)),     & \hbox{якщо~} j_1<i_2;\\
      (i_1+1,j_2+1,[0)),         & \hbox{якщо~} j_1=i_2;\\
      (i_1+1,j_1-i_2+j_2+1,[0)), & \hbox{якщо~} j_1>i_2,
    \end{array}
  \right.
\end{align*}
\begin{align*}
  (i_1,j_1,[1))\varpi\cdot(i_2,j_2,[0))\varpi&=(i_1+1,j_1+1,[0))\cdot(i_2,j_2,[1))= \\
   &=
   \left\{
    \begin{array}{ll}
      (i_1-j_1+i_2,j_2,(j_1+1-i_2+[0))\cap[1)),     & \hbox{якщо~} j_1{+}1<i_2;\\
      (i_1+1,j_2,[0)\cap[1)),                       & \hbox{якщо~} j_1{+}1=i_2;\\
      (i_1+1,j_1-i_2+j_2+1,[0)\cap(i_2-j_1-1+[1))), & \hbox{якщо~} j_1{+}1>i_2
    \end{array}
  \right.=\\
    &=
   \left\{
    \begin{array}{ll}
      (i_1-j_1+i_2,j_2,[1)),     & \hbox{якщо~} j_1+1<i_2;\\
      (i_1+1,j_2,[1)),           & \hbox{якщо~} j_1+1=i_2;\\
      (i_1+1,j_1-i_2+j_2+1,[0)), & \hbox{якщо~} j_1+1>i_2
    \end{array}
  \right.=
  \end{align*}
\begin{align*}  
    &=
   \left\{
    \begin{array}{ll}
      (i_1-j_1+i_2,j_2,[1)),       & \hbox{якщо~} j_1+1<i_2;\\
      (i_1+1-(j_1+1)+i_2,j_2,[1)), & \hbox{якщо~} j_1+1=i_2;\\
      (i_1+1,j_1-i_2+j_2+1,[0)),   & \hbox{якщо~} j_1=i_2;\\
      (i_1+1,j_1-i_2+j_2+1,[0)),   & \hbox{якщо~} j_1>i_2
    \end{array}
  \right.=\\
    &= \left\{
    \begin{array}{ll}
      (i_1-j_1+i_2,j_2,[1)),     & \hbox{якщо~} j_1<i_2;\\
      (i_1+1,j_2+1,[0)),         & \hbox{якщо~} j_1=i_2;\\
      (i_1+1,j_1-i_2+j_2+1,[0)), & \hbox{якщо~} j_1>i_2.
    \end{array}
  \right.
\end{align*}
З вище викладених рівностей випливає, що відображення $\varpi$ є ендоморфізмом напівгрупи $\boldsymbol{B}_{\omega}^{\mathscr{F}^2}$. Ін'єктивність відображення $\varpi$ очевидна.
\end{proof}

\begin{proposition}\label{proposition-2.2}
Для довільного натурального числа $n$ виконуються такі рівності
\begin{align*}
 (i,j,[0))\varpi^{2n}&=(i+n,j+n,[0)); \\
 (i,j,[1))\varpi^{2n}&=(i+n,j+n,[1)); \\
 (i,j,[0))\varpi^{2n-1}&=(i+n-1,j+n-1,[1)); \\
 (i,j,[1))\varpi^{2n-1}&=(i+n,j+n,[0)),
\end{align*}
$i,j\in\omega$.
\end{proposition}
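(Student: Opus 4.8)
The plan is to reduce everything to a single computation of $\varpi^2$ and then run a short induction. First I would compute the square of $\varpi$ directly from its definition by applying the map twice. For $p=0$ one gets
\[
(i,j,[0))\varpi^2 = ((i,j,[0))\varpi)\varpi = (i,j,[1))\varpi = (i+1,j+1,[0)),
\]
and for $p=1$,
\[
(i,j,[1))\varpi^2 = ((i,j,[1))\varpi)\varpi = (i+1,j+1,[0))\varpi = (i+1,j+1,[1)).
\]
Hence $\varpi^2$ is the diagonal shift $(i,j,[p))\mapsto(i+1,j+1,[p))$, acting by the same rule on both strata $\boldsymbol{B}_{\omega}^{\mathscr{F}^2_0}$ and $\boldsymbol{B}_{\omega}^{\mathscr{F}^2_1}$. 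This is the key observation, since it shows that the two components of the colour get mixed only at odd powers.

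Next I would establish the two even-power identities simultaneously by induction on $n$. The base case $n=1$ is precisely the computation of $\varpi^2$ above. For the inductive step I would write $\varpi^{2(n+1)}=\varpi^{2n}\varpi^2$, apply the inductive hypothesis and then the formula for $\varpi^2$:
\[
(i,j,[p))\varpi^{2(n+1)} = \bigl((i,j,[p))\varpi^{2n}\bigr)\varpi^2 = (i+n,j+n,[p))\varpi^2 = (i+n+1,j+n+1,[p)),
\]
which is the required statement with $n$ replaced by $n+1$, for both $p=0$ and $p=1$.

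Finally, the two odd-power identities follow from the even ones without a separate induction. Using $\varpi^{2n-1}=\varpi^{2(n-1)}\varpi$ together with the already proved even formula for the exponent $2(n-1)$, I would compute
\[
(i,j,[0))\varpi^{2n-1} = \bigl((i,j,[0))\varpi^{2(n-1)}\bigr)\varpi = (i+n-1,j+n-1,[0))\varpi = (i+n-1,j+n-1,[1))
\]
and
\[
(i,j,[1))\varpi^{2n-1} = \bigl((i,j,[1))\varpi^{2(n-1)}\bigr)\varpi = (i+n-1,j+n-1,[1))\varpi = (i+n,j+n,[0)),
\]
which are exactly the remaining two equalities.

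I do not anticipate a genuine obstacle here: each step is a direct substitution into the definition of $\varpi$, and the whole argument rests on the single fact that $\varpi^2$ is a colour-preserving diagonal shift. The only points requiring minor care are the boundary case $n=1$ in the odd formulas, where $\varpi^{2(n-1)}=\varpi^0$ must be read as the identity endomorphism of $\boldsymbol{B}_{\omega}^{\mathscr{F}^2}$, and the consistent treatment of $n$ as a positive integer, so that the exponents $2n-1$ and $2n$ together sweep out all positive integers and every power of $\varpi$ is covered.
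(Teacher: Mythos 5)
Your proof is correct and follows essentially the same route as the paper: both compute $\varpi^2$ directly from the definition to see that it is the colour-preserving diagonal shift $(i,j,[p))\mapsto(i+1,j+1,[p))$ and then conclude by induction. The paper leaves the induction and the derivation of the odd-power formulas implicit, whereas you spell them out (including the $\varpi^0$ boundary case), so your write-up is simply a more detailed version of the same argument.
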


\begin{proof}
З визначення ендоморфізму $\varpi\colon \boldsymbol{B}_{\omega}^{\mathscr{F}^2}\to \boldsymbol{B}_{\omega}^{\mathscr{F}^2}$ випливає, що
\begin{align*}
  (i,j,[0))\varpi^{2}&=(i,j,[1))\varpi=(i+1,j+1,[0)); \\
 (i,j,[1))\varpi^{2} &=(i+1,j+1,[0))\varpi=(i+1,j+1,[1)).
\end{align*}
Далі скористаємося індукцією.
\end{proof}

З леми~\ref{lemma-2.1} і твердження~\ref{proposition-2.2} випливає

\begin{corollary}\label{corollary-2.3}
Для довільного натурального числа $n$ відображення $\varpi^n\colon \boldsymbol{B}_{\omega}^{\mathscr{F}^2}\to \boldsymbol{B}_{\omega}^{\mathscr{F}^2}$ є ін'єктивним ендоморфізмом напівгрупи $\boldsymbol{B}_{\omega}^{\mathscr{F}^2}$.
\end{corollary}

Означивши $(i,j,[p))\varpi^0=(i,j,[p))$, отримуємо, що $\varpi^0$ є одиницею напівгрупи $\overline{\boldsymbol{End}}(\boldsymbol{B}_{\omega}^{\mathscr{F}^2})$, а отже, $\left\langle\varpi\right\rangle=\left\{\varpi^n\colon n\in\mathbb{N}\right\}$  i $\left\langle\varpi\right\rangle^1=\left\langle\varpi\right\rangle\cup\{\varpi^0\}$ є циклічною піднапівгрупою та циклічним підмоноїдом моноїда  $\overline{\boldsymbol{End}}(\boldsymbol{B}_{\omega}^{\mathscr{F}^2})$, відповідно.

Для довільних $s\in\omega$ i $p\in\{0,1\}$ означимо
\begin{equation*}
  \boldsymbol{B}_{\omega}^{\mathscr{F}^2}(s,p)=(s,s,[p))\boldsymbol{B}_{\omega}^{\mathscr{F}^2}(s,s,[p))= (s,s,[p))\boldsymbol{B}_{\omega}^{\mathscr{F}^2}\cap\boldsymbol{B}_{\omega}^{\mathscr{F}^2}(s,s,[p)).
\end{equation*}
З визначення напівгрупової операції на $\boldsymbol{B}_{\omega}^{\mathscr{F}^2}$ випливає, що для довільних $s\in\omega$ i $p\in\{0,1\}$ множина $\boldsymbol{B}_{\omega}^{\mathscr{F}^2}(s,p)$ є піднапівгрупою в $\boldsymbol{B}_{\omega}^{\mathscr{F}^2}$ з одиничним елементом $(s,s,[p))$.

\begin{proposition}\label{proposition-2.4}
Для довільного числа $s\in\omega$  виконуються такі рівності
\begin{equation*}
  \boldsymbol{B}_{\omega}^{\mathscr{F}^2}(s,0)=\big(\boldsymbol{B}_{\omega}^{\mathscr{F}^2}\big)\varpi^{2s} \qquad \hbox{i} \qquad \boldsymbol{B}_{\omega}^{\mathscr{F}^2}(s,1)=\big(\boldsymbol{B}_{\omega}^{\mathscr{F}^2}\big)\varpi^{2s-1}.
\end{equation*}
Більше того, для довільних $s\in\omega$ i $p\in\{0,1\}$ напівгрупа $\boldsymbol{B}_{\omega}^{\mathscr{F}^2}(s,p)$ ізоморфна напівгрупі $\boldsymbol{B}_{\omega}^{\mathscr{F}^2}$.
\end{proposition}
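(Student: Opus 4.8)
The plan is to reduce the isomorphism assertion to the two displayed set-equalities and then to establish the latter by computing both sides explicitly. For the reduction, note that by Corollary~\ref{corollary-2.3} each of $\varpi^{2s}$ and $\varpi^{2s-1}$ is an \emph{injective} endomorphism of $\boldsymbol{B}_{\omega}^{\mathscr{F}^2}$, and an injective endomorphism of a semigroup restricts to an isomorphism onto its image. Hence as soon as $\boldsymbol{B}_{\omega}^{\mathscr{F}^2}(s,p)=\big(\boldsymbol{B}_{\omega}^{\mathscr{F}^2}\big)\varpi^{k}$ is known for the relevant $k$, the subsemigroup $\boldsymbol{B}_{\omega}^{\mathscr{F}^2}(s,p)$, being the image of $\varpi^{k}$, is isomorphic to $\boldsymbol{B}_{\omega}^{\mathscr{F}^2}$. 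So the whole content is in the two equalities.

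The right-hand sides are immediate from Proposition~\ref{proposition-2.2}: applying its four formulas to all triples $(i,j,[q))$ gives $\big(\boldsymbol{B}_{\omega}^{\mathscr{F}^2}\big)\varpi^{2s}=\{(i,j,[q))\colon i,j\geqslant s,\ q\in\{0,1\}\}$, while the odd power yields a set whose $[1)$-layer starts one step lower than its $[0)$-layer. Since $(0,0,[0))$ is the identity of $\boldsymbol{B}_{\omega}^{\mathscr{F}^2}$, the image of any power of $\varpi$ is a monoid whose identity is $(0,0,[0))\varpi^{k}$; I would use this to pin down exactly which exponent produces the idempotent $(s,s,[p))$, and thus to read off the correct lower bounds for the two layers (this is the place where an off-by-one in the exponent is easiest to incur).

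For the left-hand sides I would use that $\boldsymbol{B}_{\omega}^{\mathscr{F}^2}(s,p)=eSe$, where $e=(s,s,[p))$ and $S=\boldsymbol{B}_{\omega}^{\mathscr{F}^2}$, together with the characterization $eSe=\{y\colon ey=y=ye\}$ valid for an idempotent $e$ (which also matches the defining form $eS\cap Se$). Writing $y=(i,j,[q))$ and evaluating $e\cdot y$ and $y\cdot e$ by \eqref{eq-1.1}, the analysis splits into the cases $i<s$, $i=s$, $i>s$ for the left equation and $j<s$, $j=s$, $j>s$ for the right one; throughout one uses that translating $[0)$ or $[1)$ by a negative integer and intersecting with $[q)$ returns $[q)$. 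For $p=0$ this yields precisely $i,j\geqslant s$ with $q$ arbitrary, matching $\big(\boldsymbol{B}_{\omega}^{\mathscr{F}^2}\big)\varpi^{2s}$. For $p=1$ one gets $i,j\geqslant s$ together with the extra requirement that $q=1$ on the boundary $i=s$ or $j=s$, because there $[1)\cap[q)=[1)$ forces the third coordinate up. The main obstacle is exactly this boundary bookkeeping in the $p=1$ case: it is what makes the $[0)$- and $[1)$-layers of $\boldsymbol{B}_{\omega}^{\mathscr{F}^2}(s,1)$ begin at different heights and what fixes the parity and precise value of the odd exponent. Once the explicit sets are in hand, the comparison with the images of Proposition~\ref{proposition-2.2} is routine.
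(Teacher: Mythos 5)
Your overall strategy coincides with the paper's own (two-sentence) proof, which simply cites Proposition~\ref{proposition-2.2} for the set equalities and Corollary~\ref{corollary-2.3} for the isomorphism, and the ingredients you describe are individually sound: reducing the isomorphism claim to the set equalities via injectivity of $\varpi^k$, describing $\big(\boldsymbol{B}_{\omega}^{\mathscr{F}^2}\big)\varpi^{k}$ from Proposition~\ref{proposition-2.2}, and computing $eSe=\{y\colon ey=y=ye\}$ for $e=(s,s,[p))$ by the three-case analysis of \eqref{eq-1.1}. Your explicit answers are also correct: $\boldsymbol{B}_{\omega}^{\mathscr{F}^2}(s,0)=\{(i,j,[q))\colon i,j\geqslant s\}=\big(\boldsymbol{B}_{\omega}^{\mathscr{F}^2}\big)\varpi^{2s}$, and $\boldsymbol{B}_{\omega}^{\mathscr{F}^2}(s,1)$ has its $[1)$-layer starting at $s$ and its $[0)$-layer starting at $s+1$.

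The gap is that you never execute the step you yourself flag as the dangerous one: pinning down the odd exponent. Carrying it out shows that the two sets you compute do \emph{not} coincide for the exponent $2s-1$. By Proposition~\ref{proposition-2.2} (with $n=s$), $\big(\boldsymbol{B}_{\omega}^{\mathscr{F}^2}\big)\varpi^{2s-1}$ has $[1)$-layer starting at $s-1$ and $[0)$-layer starting at $s$, i.e.\ it equals $\boldsymbol{B}_{\omega}^{\mathscr{F}^2}(s-1,1)$ rather than $\boldsymbol{B}_{\omega}^{\mathscr{F}^2}(s,1)$; equivalently, $(0,0,[0))\varpi^{2s-1}=(s-1,s-1,[1))$, while the identity of $\boldsymbol{B}_{\omega}^{\mathscr{F}^2}(s,1)$ is $(s,s,[1))=(0,0,[0))\varpi^{2s+1}$. (Already at $s=0$ the exponent $2s-1=-1$ is not in $\omega$.) So the second displayed equality is true only in the corrected form $\boldsymbol{B}_{\omega}^{\mathscr{F}^2}(s,1)=\big(\boldsymbol{B}_{\omega}^{\mathscr{F}^2}\big)\varpi^{2s+1}$: this is an off-by-one in the statement itself, which your identity-matching device would have detected had you pushed it through. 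As written, your proof cannot be completed for the statement as printed; with the exponent corrected, the rest of your argument goes through, including the isomorphism claim, which is unaffected.
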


\begin{proof}
Перша частина твердження випливає з твердження~\ref{proposition-2.2}. Далі скористаємося наслідком~\ref{corollary-2.3}.
\end{proof}

\section{Зображення ендоморфізмфів напівгрупи $\boldsymbol{B}_{\omega}^{\mathscr{F}^2}$ композицією її моноїдальних ендоморфізмів і елементами напівгрупи $\left\langle\varpi\right\rangle^1$}\label{section-2}

Зафіксуємо довільний ендоморфізм $\varepsilon$ напівгрупи $\boldsymbol{B}_{\omega}^{\mathscr{F}^2}$. За твердженням~1.4.21~\cite{Lawson-1998} i лемою~2~\cite{Gutik-Mykhalenych-2020} існує ідемпотент $(s,s,[p))\in \boldsymbol{B}_{\omega}^{\mathscr{F}^2}$, $s\in\omega$, $p\in\{0,1\}$, такий, що $(s,s,[p))=(0,0,[0))\varepsilon$. Оскільки $(0,0,[0))$~--- одиниця напівгрупи $\boldsymbol{B}_{\omega}^{\mathscr{F}^2}$, то $(s,s,[p))$~--- одиничний елемент напівгрупи $\big(\boldsymbol{B}_{\omega}^{\mathscr{F}^2}\big)\varepsilon$, а отже, образ $\big(\boldsymbol{B}_{\omega}^{\mathscr{F}^2}\big)\varepsilon$ є піднапівгрупою в $\boldsymbol{B}_{\omega}^{\mathscr{F}^2}(s,p)$. За твердженням~\ref{proposition-2.4} існує число $n\in\omega$ таке, що $(s,s,[p))=(0,0,[0))\varpi^n$. З ін'єктивності ендоморфізму $\varpi^n\colon \boldsymbol{B}_{\omega}^{\mathscr{F}^2}\to \boldsymbol{B}_{\omega}^{\mathscr{F}^2}$ (наслідок~\ref{corollary-2.3}) випливає, що композиція $\varepsilon\big(\varpi^n\big)^{-1}$ є моноїдальним ендоморфізмом моноїда $\boldsymbol{B}_{\omega}^{\mathscr{F}^2}$. Позначимо $\varepsilon_1=\varepsilon\big(\varpi^n\big)^{-1}$. З ін'єктивності ендоморфізму $\varpi^n\colon \boldsymbol{B}_{\omega}^{\mathscr{F}^2}\to \boldsymbol{B}_{\omega}^{\mathscr{F}^2}$  випливає, що $\varepsilon=\varepsilon_1\varpi^n$. Оскільки часткове відоб\-раження $\big(\varpi^n\big)^{-1}\colon \boldsymbol{B}_{\omega}^{\mathscr{F}^2}\rightharpoonup \boldsymbol{B}_{\omega}^{\mathscr{F}^2}$ є ін'єктивним і $\big(\boldsymbol{B}_{\omega}^{\mathscr{F}^2}(s,p)\big)\big(\varpi^n\big)^{-1}=\boldsymbol{B}_{\omega}^{\mathscr{F}^2}$, то ендоморфізм $\varepsilon_1$ визначено коректно, і він єдиний.

Отже, доведена така теорема.

\begin{theorem}\label{theorem-3.1}
Для довільного ендоморфізму $\varepsilon$ напівгрупи $\boldsymbol{B}_{\omega}^{\mathscr{F}^2}$ існують єдиний моноїдальний ендоморфізм  $\varepsilon_1\colon \boldsymbol{B}_{\omega}^{\mathscr{F}^2}\to \boldsymbol{B}_{\omega}^{\mathscr{F}^2}$ та єдине невід'ємне ціле число $n$ такі, що $\varepsilon=\varepsilon_1\varpi^n$, причому ендоморфізм $\varepsilon$ є ін'єктивним (анулюючим) тоді і лише тоді, коли  моноїдальний ендоморфізм  $\varepsilon_1$ є ін'єктивним (анулюючим).
\end{theorem}

З теореми~\ref{theorem-3.1} випливає

\begin{corollary}\label{corollary-3.2}
$\overline{\boldsymbol{End}}(\boldsymbol{B}_{\omega}^{\mathscr{F}^2})=\boldsymbol{End}(\boldsymbol{B}_{\omega}^{\mathscr{F}^2})\cdot \left\langle\varpi\right\rangle^1$.
\end{corollary}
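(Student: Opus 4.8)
The plan is to deduce the claimed identity directly from Theorem~\ref{theorem-3.1} by verifying the two set inclusions separately. By the definition of the product of two subsets of the endomorphism semigroup, the right-hand side $\boldsymbol{End}(\boldsymbol{B}_{\omega}^{\mathscr{F}^2})\cdot\left\langle\varpi\right\rangle^1$ is precisely the set of all composites $\varepsilon_1\varpi^n$ in which $\varepsilon_1$ runs over the monoid endomorphisms of $\boldsymbol{B}_{\omega}^{\mathscr{F}^2}$ and $\varpi^n$ runs over $\left\langle\varpi\right\rangle^1=\left\{\varpi^n\colon n\in\omega\right\}$.

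First I would establish the inclusion $\overline{\boldsymbol{End}}(\boldsymbol{B}_{\omega}^{\mathscr{F}^2})\subseteq\boldsymbol{End}(\boldsymbol{B}_{\omega}^{\mathscr{F}^2})\cdot\left\langle\varpi\right\rangle^1$ by invoking the existence assertion of Theorem~\ref{theorem-3.1}: an arbitrary endomorphism $\varepsilon$ of $\boldsymbol{B}_{\omega}^{\mathscr{F}^2}$ factors as $\varepsilon=\varepsilon_1\varpi^n$ with $\varepsilon_1$ a monoid endomorphism and $n\in\omega$, which is exactly membership in the right-hand product. For the reverse inclusion I would show that the right-hand side sits inside $\overline{\boldsymbol{End}}(\boldsymbol{B}_{\omega}^{\mathscr{F}^2})$. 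Every monoid endomorphism is in particular an endomorphism, so $\boldsymbol{End}(\boldsymbol{B}_{\omega}^{\mathscr{F}^2})\subseteq\overline{\boldsymbol{End}}(\boldsymbol{B}_{\omega}^{\mathscr{F}^2})$; by Corollary~\ref{corollary-2.3} each power $\varpi^n$ is an injective endomorphism, so $\left\langle\varpi\right\rangle^1\subseteq\overline{\boldsymbol{End}}(\boldsymbol{B}_{\omega}^{\mathscr{F}^2})$; and $\overline{\boldsymbol{End}}(\boldsymbol{B}_{\omega}^{\mathscr{F}^2})$, being the full endomorphism semigroup, is closed under composition. Hence every composite $\varepsilon_1\varpi^n$ again lies in $\overline{\boldsymbol{End}}(\boldsymbol{B}_{\omega}^{\mathscr{F}^2})$, which yields the opposite inclusion and thus the desired equality.

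I do not expect a genuine obstacle at this stage: the whole of the work is already carried by Theorem~\ref{theorem-3.1}, and the corollary is no more than its reformulation as an equality of subsets of the endomorphism semigroup. The only points deserving a word of care are bookkeeping ones, namely confirming that the juxtaposition appearing on the right-hand side coincides with composition in $\overline{\boldsymbol{End}}(\boldsymbol{B}_{\omega}^{\mathscr{F}^2})$ and that both factor-sets genuinely embed into $\overline{\boldsymbol{End}}(\boldsymbol{B}_{\omega}^{\mathscr{F}^2})$, so that the product is computed inside that semigroup.
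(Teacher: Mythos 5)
Your proposal is correct and follows exactly the paper's route: the authors state the corollary as an immediate consequence of Theorem~\ref{theorem-3.1}, and your two inclusions (existence of the factorization $\varepsilon=\varepsilon_1\varpi^n$ for one direction, closure of $\overline{\boldsymbol{End}}(\boldsymbol{B}_{\omega}^{\mathscr{F}^2})$ under composition for the other) are precisely the routine details they leave implicit. Nothing is missing.
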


Ін'єктивні моноїдальні ендоморфізми напівгрупи $\boldsymbol{B}_{\omega}^{\mathscr{F}^2}$ описані в праці \cite{Gutik-Pozdniakova=2022}. За теоремою~1 з \cite{Gutik-Pozdniakova=2022} кожен такий ендоморфізм $\varepsilon$ є одного з двох типів: або він збігається з перетворенням $\alpha_{k,p}$ моноїда $\boldsymbol{B}_{\omega}^{\mathscr{F}^2}$, яке визначається
\begin{align*}
  (i,j,[0))\alpha_{k,p}&=(ki,kj,[0)), \\
  (i,j,[1))\alpha_{k,p}&=(p+ki,p+kj,[1)),
\end{align*}
для всіх $i,j\in\omega$, де $k$~--- деяке натуральне число і $p\in\{0,\ldots,k-1\}$, або він збігається з перетворенням $\beta_{k,p}$ моноїда $\boldsymbol{B}_{\omega}^{\mathscr{F}^2}$, яке визначається
\begin{align*}
  (i,j,[0))\beta_{k,p}&=(ki,kj,[0)), \\
  (i,j,[1))\beta_{k,p}&=(p+ki,p+kj,[0)),
\end{align*}
для всіх $i,j\in\omega$, де $k$~--- деяке натуральне число $\geqslant 2$ i $p\in\{1,\ldots,k-1\}$.

Якщо ендоморфізм $\varepsilon$ напівгрупи $\boldsymbol{B}_{\omega}^{\mathscr{F}^2}$ є ін'єктивним відображенням, то з ін'єктивності ендоморфізму $\varpi^n\colon \boldsymbol{B}_{\omega}^{\mathscr{F}^2}\to \boldsymbol{B}_{\omega}^{\mathscr{F}^2}$ випливає, що  $\varepsilon_1=\varepsilon\big(\varpi^n\big)^{-1}$ є моноїдальним ін'єктивним ендоморфізмом моноїда $\boldsymbol{B}_{\omega}^{\mathscr{F}^2}$. Отже, виконується така теорема.

\begin{theorem}\label{theorem-3.3}
Для довільного ін'єктивного ендоморфізму $\varepsilon$ напівгрупи $\boldsymbol{B}_{\omega}^{\mathscr{F}^2}$ виконується лише одна з таких умов:
\begin{itemize}
  \item[$(i)$]  існують єдині невід'ємне ціле число $n$,  натуральне число $k$ і $p\in\{0,\ldots,k-1\}$ такі, що $\varepsilon=\alpha_{k,p}\varpi^n$;
  \item[$(ii)$] існують єдині невід'ємне ціле число $n$,  натуральне число $k\geqslant 2$ і $p\in\{1,\ldots,k-1\}$ такі, що $\varepsilon=\beta_{k,p}\varpi^n$.
\end{itemize}
\end{theorem}

Очевидно, що для довільної напівгрупи $S$ композиція її двох (моноїдальних) ін'єктивних ендоморфізмів є (моноїдальним) ін'єктивним ендоморфізмом.

Через $\overline{\boldsymbol{End}_{\mathrm{inj}}}(\boldsymbol{B}_{\omega}^{\mathscr{F}^2})$ i $\boldsymbol{End}_{\mathrm{inj}}(\boldsymbol{B}_{\omega}^{\mathscr{F}^2})$ позначимо моноїди усіх ін'єктивних і всіх моноїдальних ін'єк\-тив\-них ендоморфізмів моноїда  $\boldsymbol{B}_{\omega}^{\mathscr{F}^2}$. Очевидно, що $\boldsymbol{End}_{\mathrm{inj}}(\boldsymbol{B}_{\omega}^{\mathscr{F}^2})$ --- піднапівгрупа в $\overline{\boldsymbol{End}_{\mathrm{inj}}}(\boldsymbol{B}_{\omega}^{\mathscr{F}^2})$ і в $\boldsymbol{End}(\boldsymbol{B}_{\omega}^{\mathscr{F}^2})$, а $\overline{\boldsymbol{End}_{\mathrm{inj}}}(\boldsymbol{B}_{\omega}^{\mathscr{F}^2})$ --- піднапівгрупа в $\overline{\boldsymbol{End}}(\boldsymbol{B}_{\omega}^{\mathscr{F}^2})$.

З теореми~\ref{theorem-3.3} випливає

\begin{corollary}\label{corollary-3.4}
$\overline{\boldsymbol{End}_{\mathrm{inj}}}(\boldsymbol{B}_{\omega}^{\mathscr{F}^2})=\boldsymbol{End}_{\mathrm{inj}}(\boldsymbol{B}_{\omega}^{\mathscr{F}^2}) \cdot \left\langle\varpi\right\rangle^1$.
\end{corollary}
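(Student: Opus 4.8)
The plan is to read this off Theorem~\ref{theorem-3.3} in exactly the way Corollary~\ref{corollary-3.2} is read off Theorem~\ref{theorem-3.1}, the only extra care being to track injectivity through both factors of each product. I would establish the set equality by proving the two inclusions separately, using that $\boldsymbol{End}_{\mathrm{inj}}(\boldsymbol{B}_{\omega}^{\mathscr{F}^2})$ and $\overline{\boldsymbol{End}_{\mathrm{inj}}}(\boldsymbol{B}_{\omega}^{\mathscr{F}^2})$ are the subsemigroups of injective monoid endomorphisms and of all injective endomorphisms, respectively.

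For the inclusion $\overline{\boldsymbol{End}_{\mathrm{inj}}}(\boldsymbol{B}_{\omega}^{\mathscr{F}^2})\subseteq \boldsymbol{End}_{\mathrm{inj}}(\boldsymbol{B}_{\omega}^{\mathscr{F}^2})\cdot\left\langle\varpi\right\rangle^1$, I would take an arbitrary injective endomorphism $\varepsilon$ and apply Theorem~\ref{theorem-3.3}: either $\varepsilon=\alpha_{k,p}\varpi^n$ or $\varepsilon=\beta_{k,p}\varpi^n$ for suitable $n,k,p$. It then remains to observe that the left factor is an injective monoid endomorphism. That $\alpha_{k,p}$ and $\beta_{k,p}$ fix the identity $(0,0,[0))$ is immediate from their defining formulas, so they lie in $\boldsymbol{End}(\boldsymbol{B}_{\omega}^{\mathscr{F}^2})$; and a short computation from those formulas gives injectivity. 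For $\alpha_{k,p}$ the two blocks land in the disjoint sets with last coordinate $[0)$ and $[1)$, and each of the assignments $(i,j)\mapsto(ki,kj)$ and $(i,j)\mapsto(p+ki,p+kj)$ is injective. For $\beta_{k,p}$ both blocks land in the $[0)$-block, but since $p\in\{1,\ldots,k-1\}$ is not divisible by $k$, an equality $ki=p+ki'$ is impossible, so no collision between the images of the two blocks occurs. Hence $\alpha_{k,p},\beta_{k,p}\in\boldsymbol{End}_{\mathrm{inj}}(\boldsymbol{B}_{\omega}^{\mathscr{F}^2})$ and $\varepsilon$ has the required factorization.

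For the reverse inclusion I would take $\varepsilon_1\in\boldsymbol{End}_{\mathrm{inj}}(\boldsymbol{B}_{\omega}^{\mathscr{F}^2})$ and $\varpi^n\in\left\langle\varpi\right\rangle^1$. Both are injective endomorphisms of $\boldsymbol{B}_{\omega}^{\mathscr{F}^2}$, the second by Corollary~\ref{corollary-2.3}, so their product is again an endomorphism, and it is injective as a composition of injective maps. Thus $\varepsilon_1\varpi^n\in\overline{\boldsymbol{End}_{\mathrm{inj}}}(\boldsymbol{B}_{\omega}^{\mathscr{F}^2})$, which closes the argument.

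I do not anticipate a genuine obstacle: the whole weight of the statement already resides in Theorem~\ref{theorem-3.3}, and the corollary is essentially bookkeeping. The only point demanding attention is the verification that the distinguished factors $\alpha_{k,p}$ and $\beta_{k,p}$ are injective. One may even bypass the explicit check entirely by invoking the injectivity-transfer clause of Theorem~\ref{theorem-3.1}: the monoid factor $\varepsilon_1$ is injective precisely when $\varepsilon$ is, so the factorization furnished by Corollary~\ref{corollary-3.2} automatically restricts to the injective endomorphisms, giving the claimed equality at once.
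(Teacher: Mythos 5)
Your argument is correct and follows the paper's own route: the paper likewise obtains this corollary directly from Theorem~\ref{theorem-3.3} (equivalently, from the injectivity-transfer clause of Theorem~\ref{theorem-3.1}, since $\varepsilon_1=\varepsilon\big(\varpi^n\big)^{-1}$ is injective whenever $\varepsilon$ is), with the reverse inclusion being the trivial observation that a composition of injective endomorphisms is an injective endomorphism. Your explicit re-verification that $\alpha_{k,p}$ and $\beta_{k,p}$ are injective is harmless but redundant, as these maps are taken from the classification of injective monoid endomorphisms in the cited work.
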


Очевидно, що для довільних $s\in\omega$ i $q\in\{0,1\}$ відображення $\chi_{s,q}\colon \boldsymbol{B}_{\omega}^{\mathscr{F}^2}\to \boldsymbol{B}_{\omega}^{\mathscr{F}^2}$, означене
\begin{equation*}
  (i,j,[p))\chi_{s,q}=(s,s,[q)),
\end{equation*}
для довільних $i,j\in\omega$ і $p\in\{0,1\}$, є анулюючим ендоморфізмом напівгрупи  $\boldsymbol{B}_{\omega}^{\mathscr{F}^2}$. Також, з визначення ендоморфізму  $\chi_{s,q}$ випливає, що для довільного ендоморфізму $\varepsilon\colon \boldsymbol{B}_{\omega}^{\mathscr{F}^2}\to \boldsymbol{B}_{\omega}^{\mathscr{F}^2}$ виконуються рівності
\begin{equation}\label{eq-3.1}
\varepsilon\chi_{s,q}=\chi_{s,q} \qquad \hbox{i} \qquad \chi_{s,q}\varepsilon=\chi_{s_1,q_1},
\end{equation}
де числа $s_1\in\omega$ i $q_1\in\{0,1\}$ задовольняють умову $(s,s,[q))\varepsilon=(s_1,s_1,[q_1))$. Отже, множина
\begin{equation*}
  \overline{\boldsymbol{End}_{\mathrm{ann}}}(\boldsymbol{B}_{\omega}^{\mathscr{F}^2})=\big\{\epsilon\in \overline{\boldsymbol{End}}(\boldsymbol{B}_{\omega}^{\mathscr{F}^2})\colon \epsilon=\chi_{s,q} \hbox{~для деяких~} s\in\omega, q\in\{0,1\}\big\}
\end{equation*}
збігається з напівгрупою всіх анулюючих ендоморфізмів напівгрупи $\boldsymbol{B}_{\omega}^{\mathscr{F}^2}$. Також з рівності \eqref{eq-3.1} випливає

\begin{theorem}\label{theorem-3.4}
Напівгрупа $\overline{\boldsymbol{End}_{\mathrm{ann}}}(\boldsymbol{B}_{\omega}^{\mathscr{F}^2})$ ізоморфна нескінченній зліченній напівгрупі правих нулів і вона є мінімальним ідеалом напівгрупи  $\overline{\boldsymbol{End}}(\boldsymbol{B}_{\omega}^{\mathscr{F}^2})$.
\end{theorem}

Для довільного натурального числа $k$ означимо відображення $\gamma_k\colon \boldsymbol{B}_{\omega}^{\mathscr{F}^2}\to \boldsymbol{B}_{\omega}^{\mathscr{F}^2}$ i $\delta_k\colon \boldsymbol{B}_{\omega}^{\mathscr{F}^2}\to \boldsymbol{B}_{\omega}^{\mathscr{F}^2}$ за формулами
\begin{equation*}
(i,j,[0))\gamma_k=(i,j,[1))\gamma_k=(ki,kj,[0))
\end{equation*}
та
\begin{equation*}
  (i,j,[0))\delta_k=(ki,kj,[0)) \qquad \hbox{i} \qquad (i,j,[1))\delta_k=(k(i + 1), k(j + 1),[0)),
\end{equation*}
для всіх $i, j\in\omega$, відповідно.

За теоремою~1 з \cite{Gutik-Pozdniakova=2023} для довільного неін'єктивного ендоморфізму $\varepsilon$ напівгрупи $\boldsymbol{B}_{\omega}^{\mathscr{F}^2}$ виконується лише одна з умов:
\begin{itemize}
  \item[$(i)$]   $\varepsilon$~--- анулюючий ендоморфізм;
  \item[$(ii)$]  $\varepsilon=\gamma_k$ для деякого натурального числа $k$;
  \item[$(iii)$] $\varepsilon=\delta_k$ для деякого натурального числа $k$.
\end{itemize}

Тоді з вище наведених результатів і теореми~\ref{theorem-3.1} випливає

\begin{theorem}\label{theorem-3.5}
Для довільного неанулюючого неін'єктивного ендоморфізму $\varepsilon$ напівгрупи $\boldsymbol{B}_{\omega}^{\mathscr{F}^2}$ виконується лише одна з таких умов:
\begin{itemize}
  \item[$(i)$]  існують єдині невід'ємне ціле число $n$ і натуральне число $k$ такі, що $\varepsilon=\gamma_{k}\varpi^n$;
  \item[$(ii)$] існують єдині невід'ємне ціле число $n$ і натуральне число $k$ такі, що $\varepsilon=\delta_{k}\varpi^n$.
\end{itemize}
\end{theorem}





\begin{thebibliography}{10}




\bibitem{Wagner-1952}
В. В.~Вагнер,
\emph{Обобщенные группы},
ДАН СССР \textbf{84} (1952), 1119--1122.

\bibitem{Gutik-Mykhalenych-2020}
О.~Гутік, М. Михаленич,
\emph{Про одне узагальнення бiциклiчного моноїда},
Вісник Львів. ун-ту. Сер. мех.-мат. \textbf{90} (2020), 5--19.

\bibitem{Gutik-Mykhalenych-2021}
О.~Гутік, М. Михаленич,
\emph{Про груповi конгруенцiї на напiвгрупi $\boldsymbol{B}_{\omega}^{\mathscr{F}}$ та \"{\i}\"{\i} гомоморфнi ретракти у випадку, коли сiм'я $\mathscr{F}$ складається з непорожнiх iндуктивних пiдмножин у $\omega$},
Вісник Львів. ун-ту. Сер. мех.-мат. \textbf{91} (2021), 5--27.

\bibitem{Gutik-Mykhalenych-2022}
О. Гутік, М. Михаленич,
\emph{Про автоморфiзми напiвгрупи $\boldsymbol{B}_{\omega}^{\mathscr{F}}$  у випадку сiм'ї $\mathscr{F}$ непорожнiх iндуктивних пiдмножин у  ${\omega}$},
Вісник Львів. ун-ту. Серія мех.-мат. \textbf{93} (2022), 54--65.

\bibitem{Gutik-Prokhorenkova-2022}
О. Гутік, О. Прохоренкова,
\emph{Про гомоморфiзми бiциклiчних розширень архiмедових лiнiйно впорядкованих груп},
Вісник Львів. ун-ту. Серія мех.-мат. \textbf{93} (2022), 42--53.

\bibitem{Gutik-Prokhorenkova-Sekh-2021}
О. Гутік, О. Прохоренкова, Д. Сех,
\emph{Про ендоморфiзми бiциклiчної напiвгрупи та розширеної бiциклiчної напiвгрупи},
Вісник Львів. ун-ту. Серія мех.-мат. \textbf{92} (2021), 5--16.


\bibitem{Andersen-1952}
O. Andersen,
\emph{Ein Bericht uber die Struk\-tur abstrakter Halbgruppen},
PhD Thesis. Ham\-burg, 1952.

\bibitem{Bruck-1958}
R. H. Bruck,
\emph{A survey of binary systems},
(Erg. Math. Grenzgebiete. Neue Folge. Heft 20) Springer, Berlin-G\"{o}ttingen-Heidelberg,  1958.

\bibitem{Clifford-Preston-1961}
A.~H.~Clifford and G.~B.~Preston,
\emph{The Algebraic Theory of Semigroups}, Vol. I,
Amer. Math. Soc. Surveys {\bf 7}, Pro\-vi\-den\-ce, R.I.,  1961.


\bibitem{Clifford-Preston-1967}
A.~H.~Clifford and G.~B.~Preston,
\emph{The Algebraic Theory of Semigroups}, Vol.  II,
Amer. Math. Soc. Surveys {\bf 7}, Pro\-vi\-den\-ce, R.I.,   1967.

\bibitem{Fortunatov-1976}
V. A. Fortunatov,
\emph{Congruences on simple extensions of semigroups},
Semigroup Forum \textbf{13} (1976), 283--295.

\bibitem{Fotedar-1974}
G.~L.~Fotedar,
\emph{On a semigroup associated with an ordered group},
Math. Nachr. \textbf{60} (1974), 297--302.

\bibitem{Fotedar-1978}
G.~L.~Fotedar, \emph{On a class of bisimple inverse semigroups},
Riv. Mat. Univ. Parma (4) \textbf{4} (1978), 49--53.

\bibitem{Gutik-Lysetska=2021}
O. Gutik and O. Lysetska,
\emph{On the semigroup $\boldsymbol{B}_{\omega}^{\mathscr{F}}$ which is generated by the family $\mathscr{F}$ of atomic subsets of $\omega$},
Вісник Львів. ун-ту. Серія мех.-мат. \textbf{92} (2021), 34--50.

\bibitem{Gutik-Pagon-Pavlyk=2011}
O. Gutik, D. Pagon, and K. Pavlyk,
\emph{Congruences on bicyclic extensions of a linearly ordered group},
Acta Comment. Univ. Tartu. Math. \textbf{15} (2011), no. 2, 61--80.

\bibitem{Gutik-Pozdniakova=2022}
O. Gutik  and I. Pozdniakova,
\emph{On the semigroup of injective monoid endomorphisms of the monoid $\boldsymbol{B}_{\omega}^{\mathscr{F}}$ with the two-elements family $\mathscr{F}$ of inductive nonempty subsets of $\omega$},
Вісник Львів. ун-ту. Серія мех.-мат. \textbf{94} (2022), 32-55

\bibitem{Gutik-Pozdniakova=2023}
O. Gutik and I. Pozdniakova,
\emph{On the semigroup of non-injective monoid endomorphisms of the semigroup $\boldsymbol{B}_{\omega}^{\mathscr{F}}$ with the two-elements family $\mathscr{F}$ of inductive nonempty subsets of $\omega$},
Вісник Львів. ун-ту. Серія мех.-мат. \textbf{95} (2023), 14--27.

\bibitem{Gutik-Pozdniakova=2024}
O. Gutik and I. Pozdniakova,
\emph{On the semigroup of all monoid endomorphisms of the semigroup $\boldsymbol{B}_{\omega}^{\mathscr{F}}$ with the two-elements family $\mathscr{F}$ of inductive nonempty subsets of $\omega$},
Вісник Львів. ун-ту. Серія мех.-мат. \textbf{96} (2024), 5--24.

\bibitem{Lawson-1998}
M.~Lawson,
\emph{Inverse Semigroups. The Theory of Partial Symmetries},
Singapore: World Scientific, 1998.

\bibitem{Lysetska=2020}
O. Lysetska,
\emph{On feebly compact topologies on the semigroup $\boldsymbol{B}_{\omega}^{\mathscr{F}_1}$},
Вісник Львів. ун-ту. Сер. мех.-мат. \textbf{90} (2020), 48--56.

\bibitem{Petrich-1984}
M.~Petrich,
\emph{Inverse Semigroups},
John Wiley $\&$ Sons, New York, 1984.

\bibitem{Reilly-1966}
N. R. Reilly,
\emph{Bisimple $\omega$-semigroups, }
Proc. Glasgow Math. Assoc. \textbf{7} (1966), 160--167.

\bibitem{Warne-1966}
R. J. Warne,
\emph{A class of bisimple inverse semigroups},
Pacif. J. Math. \textbf{18} (1966), 563--577.

\bibitem{Warne-1967}
R. J. Warne,
\emph{Bisimple inverse semigroups mod groups},
Duke Math. J. \textbf{34} (1967), 787--812.

\end{thebibliography}
\end{document}